\newtheorem{thrm}{Theorem}
\newtheorem{lemma}{Lemma}
\begin{document}

\title{Steffen's flexible polyhedron is embedded. \\
A proof via symbolic computations}
\author{Victor Alexandrov}
\address{Sobolev Institute of Mathematics, Novosibirsk, Russia and 
Department of Physics, Novosibirsk State University, Novosibirsk, Russia}
\email{alex@math.nsc.ru\newline 
\indent 
\href{https://orcid.org/0000-0002-6622-8214}{\rm{ORCID iD: 0000-0002-6622-8214}}}
\author{Evgenii Volokitin}
\address{Sobolev Institute of Mathematics, Novosibirsk, Russia and 
Department of Physics, Novosibirsk State University, Novosibirsk, Russia}
\email{volok@math.nsc.ru\newline 
\indent 
\href{https://orcid.org/0000-0002-2646-7800}{\rm{ORCID iD: 0000-0002-2646-7800}}}
\date{August 4, 2025}

\begin{abstract} 
A polyhedron is flexible if it can be continuously deformed preserving the shape 
and  dimensions of every its face. 
In the late 1970's Klaus Steffen constructed a sphere-homeomorphic embedded 
flexible polyhedron with triangular faces and with 9 vertices only,
which is well-known in the theory of flexible polyhedra.
At about the same time, a hypothesis was formulated that the Steffen polyhedron 
has the least possible number of vertices among all embedded flexible 
polyhedra without boundary. 
A counterexample to this hypothesis was constructed by Matteo Gallet, Georg Grasegger,
Jan Legersk{\'y}, and Josef Schicho in 2024 only.
Surprisingly, until now, no proof has been published in the mathematical literature 
that the Steffen polyhedron is embedded.
Probably, this fact was considered obvious to everyone who made a cardboard model 
of this polyhedron.
In this article, we prove this fact using computer symbolic calculations.
\par
\textit{KEY WORDS:} Euclidean space, flexible polyhedron, embedded polyhedron, 
symbolic computations.
\par
\textit{MSC}: 52C25, 52B70, 51M20
\end{abstract}
\maketitle

\section{Introduction}\label{sec1}
Let 
$K$ 
be a connected 2-dimensional simplicial complex with or without boundary.
Depending on the context, a \textit{polyhedron} is either a continuous map 
$f: K\to\mathbb{R}^3$ 
which is affine linear and nondegenerate on every simplex or the image  
$f(K)\subset\mathbb{R}^3$ 
of 
$K$. 
A polyhedron is \textit{embedded} (or \textit{self-intersection free}) if 
$f$ 
is injective.
A polyhedron 
$P_0:K\to\mathbb{R}^3$ 
is \textit{flexible} if there are
$\varepsilon>0$ 
and a continuous family 
$\{P_t \}_{t\in(-\varepsilon,\varepsilon)}$ 
of polyhedra 
$P_t: K\to\mathbb{R}^3$ 
such that,
for every 
$t\neq 0$, $P_t(\sigma)$ 
is congruent to  
$P_0(\sigma)$ 
for every  
$\sigma\in K$, 
while 
$P_t(K)$ 
and 
$P_0(K)$ 
themselves are not congruent to each other.
The family 
$\{P_t\}_{t\in (-\varepsilon, \varepsilon)}$ 
is a (nontrivial) \textit{flex} of
$P_0$, 
and 
$t$ 
is a \textit{parameter} of the flex.

The first examples of flexible polyhedra without boundary in 
$\mathbb{R}^3$ 
were constructed by Raoul Bricard in 1897 in \cite{Br97}. 
Nowadays they are called \textit{Bricard octahedra} since, for all of them,
$K$ 
is equivalent to the natural simplicial complex of a regular octahedron. 
Every Bricard octahedron has self-intersections.

The first example of an embedded flexible polyhedron without boundary in 
$\mathbb{R}^3$ 
was constructed by Robert Connelly in 1977 in \cite{Co77}; 
it has 18 vertices and its simplicial complex 
$K$ 
is homeomorphic to the sphere.

In 1978, Klaus Steffen constructed an example of an embedded sphere-homemorphic
flexible polyhedron with only 9 vertices.
Though Steffen never published his example in a mathematical journal, nowadays
it is widely known as the Steffen polyhedron.
For a long period of time the Steffen polyhedron was supposed to have 
the least possible number of vertices among all embedded flexible polyhedra
without boundary.
A conterexample was constructed by Matteo Gallet, Georg Grasegger, Jan Legersk{\'y}, 
and Josef Schicho in 2024 only, see \cite{GGLS24}.

Surprisingly, until now, no proof has been published in the mathematical literature 
that the Steffen polyhedron is embedded.
Probably, the reason is that this fact is considered obvious to everyone who made a
cardboard model of this polyhedron. 

In this article, we give the first proof of this fact.
Our proof is based on computer symbolic calculations.
The corresponding algorithm was previously presented in our article 
\cite{AV24} and post \cite{AV25}.

\section{The Steffen polyhedron $S_0$}\label{sec2} 

In Section \ref{sec2} we briefly explain what the Steffen polyhedron is. 
The easiest way to do this is to explain how to build its cardboard model.
Another approach to introduce the Steffen polyhedron to the reader is realized 
in \cite{Al10}.

A cardboard model of the Steffen polyhedron 
can be glued from the development shown in Fig.~\ref{fig1}. 
The gluing instructions and explanations are given in the capture under the figure.

\begin{figure}[h]
\begin{center}
\includegraphics[width=0.8\textwidth]{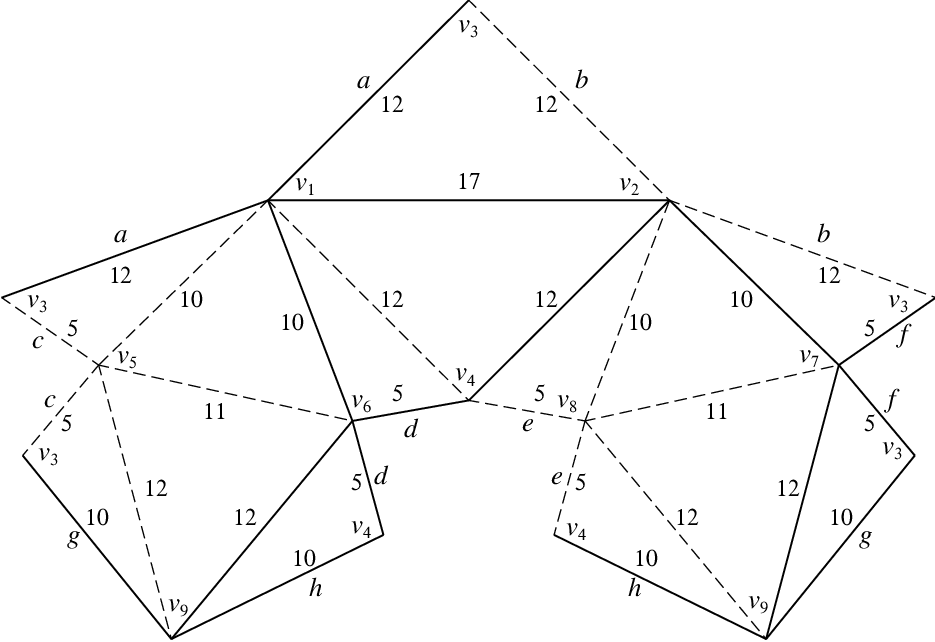}
\end{center}
\caption{Development of the Steffen polyhedron.
Solid lines represent mountain folds, dash lines are valley folds.
Integer numbers (other than subscripts) indicate edge lengths.
Symbols 
$v_j$ 
indicate vertices, 
$j=1,\dots,9$.
Letters 
$a,\dots,h$, 
written outside the development, provide gluing instructions.}\label{fig1}
\end{figure}

When gluing, it is useful to keep in mind the following two facts, 
for which we refer the reader to \cite{Al10}:

(a) after performing all the gluings 
$a,\dots,h$, 
the spatial distance between the vertices 
$v_3$ 
and 
$v_4$ 
is automatically set to 11 (this is not immediately obvious from Fig.~\ref{fig1}, because 
$v_3$ 
and 
$v_4$ are not connected by an edge);

(b) after performing the gluings 
$a,\dots,f$, 
but not the gluings 
$g$ 
and 
$h$, 
both the vertex 
$v_9$ 
of the triangle 
$\{v_5, v_6, v_9\}$ 
and  the vertex 
$v_9$ 
of the triangle 
$\{v_7, v_8, v_9\}$ 
can move independently of each other along a circle, 
$\gamma$,
which is the intersection of two spheres of the same radius 10;
one sphere is centered at 
$v_3$, 
the other at 
$v_4$.

We call the \textit{Steffen polyhedron} 
$S_0$ 
a polyhedron glued from the development shown in Fig.~\ref{fig1} in accordance with the 
gluing instructions given in the capture under Fig.~\ref{fig1} and such that its vertex 
$v_9$ 
is located on the ray coming out of the midpoint of the segment 
$\{v_3, v_4\}$ 
and passing through the the midpoint of the segment 
$\{v_1, v_2\}$. 

Let us introduce a right-handed Cartesian coordinate system in Euclidean 
3-space such that its origin $0$ coincides with the midpoint of 
$\{v_3, v_4\}$
and its positive 
$x$- 
and $z$-semiaxis pass through the point 
$v_4$ 
and the middle point of 
$\{v_1, v_2\}$, 
respectively; see Fig.~\ref{fig2}.
In this article, we use coordinates of points relative to this coordinate system only.

\begin{figure}[h]
\begin{center}
\includegraphics[width=0.3\textwidth]{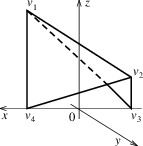}
\end{center}
\caption{The right-handed Cartesian coordinate system in Euclidean 3-space
associated with the Steffen polyhedron 
$S_0$.}\label{fig2}
\end{figure}

By definition, put 
$v_j=(x_j, y_j, z_j)$ 
for every 
$j=1,\dots, 9$.

The coordinates of the vertices 
$v_1, \dots, v_4$, and $v_9$
are known to us from the above:
\begin{equation}\label{eq1}
v_1 = (0, -\frac{17}{2}, \frac{\sqrt{166}}{2}); \ 
v_2 = (0,  \frac{17}{2}, \frac{\sqrt{166}}{2}); \ 
-v_3 = v_4 = (\frac{11}{2}, 0, 0); \ 
v_9 = (0, 0, -\frac{3\sqrt{31}}{2}).
\end{equation}
To find the coordinates of the remaining vertices 
$v_5, \dots, v_8$, 
we note that, for every 
$j=5,\dots, 8$, 
$v_j$ 
is connected by an edge to some three vertices among those, whose coordinates are 
specified in (\ref{eq1}).
For example, for  
$v_6$, 
these are
$v_1$, $v_4$, 
and 
$v_9$.
Therefore, to find the coordinates of 
$v_6$, 
we solve the following system of algebraic equations
\begin{gather}
(x_6-x_1)^2+(y_6-y_1)^2+(z_6-z_1)^2=10^2,\nonumber\\
(x_6-x_4)^2+(y_6-y_4)^2+(z_6-z_4)^2=5^2, \label{eq2}\\
(x_6-x_9)^2+(y_6-y_9)^2+(z_6-z_9)^2=12^2\nonumber
\end{gather}
with respect to 
$x_6$, $y_6$, 
and 
$z_6$.
We solve (\ref{eq2}) symbolically using the Wolfram Mathematica
software system and obtain two solutions 
$(x'_6, y'_6, z'_6)$
and
$(x''_6, y''_6, z''_6)$,
which correspond to points 
$v'_6$ 
and 
$v''_6$ 
in Euclidean 3-space, which obviously are symmetric to each other with respect 
to the plane, passing through the vertices 
$v_1$, $v_4$, 
and 
$v_9$.
In Fig.~\ref{fig1}, the edge 
$\{v_1,v_2\}$
is a mountain fold, while 
$\{v_1,v_4\}$
is a valley fold.
Hence, the vertices
$v_3$ 
and 
$v_6$ 
are located in the different halfspaces determined by the plane passing 
through the vertices
$v_1$, $v_2$, 
and 
$v_4$; 
equivalently, we can say that the vectors
$v_3$ 
and 
$v_6$ 
are such that the following mixed products
\begin{equation}\label{eq3}
((v_2-v_1)\times (v_4-v_1))\cdot (v_3-v_1)
\quad\mbox{and}\quad
((v_2-v_1)\times (v_4-v_1))\cdot (v_6-v_1)
\end{equation}
have opposite signs.
So, using (\ref{eq3}) and executing symbolic calculations 
with Wolfram Mathematica, we detect the vertex
$v_6=(x_6,y_6,z_6)$
among the two  solutions
$v'_6$ 
and 
$v''_6$
to (\ref{eq2}).
It turns out that
\begin{equation}\label{eq4}
\begin{split}
x_6&=\frac{258783870279 -389769468\sqrt{5146}+102\sqrt{31a}}{51998549858},\\  
y_6&=\frac{-89746193059+533205663\sqrt{5146}-33\sqrt{31a} -11\sqrt{166a}}{25999274929},\\
z_6&=\frac{6920764197\sqrt{31}+714577358\sqrt{166} -187\sqrt{a}}{25999274929}.
\end{split}
\end{equation}
where
$a= 23829556819105727+373057935372156\sqrt{5146}.$

Using similar arguments, we find expressions in radicals for the coordinates of 
the vertices 
$v_5$, $v_7$, 
and 
$v_8$
through symbolic calculations and use them in Section \ref{sec4}.
Those expressions are even longer then the expressions for the coordinates of 
$v_6$ 
in (\ref{eq4}) and, thus, are even less informative.
So, we omit them, but write the approximate values for the coordinates of 
$v_5, \dots, v_8$: 
\begin{equation}\label{eq5}
\begin{split}
v_5 \approx (-1.98248, 0.834943, 3.45397),\quad
v_6 \approx (6.89559, -4.79631, 0.218428), \\
v_7 \approx (-6.89559, 4.79631, 0.218428),\quad  
v_8 \approx (1.98248, -0.834943, 3.45397). 
\end{split}
\end{equation}

The main goal of this article is to prove that the Steffen polyhedron 
$S_0$
with the vertices 
$v_1, \dots, v_9$, 
the coordinates of which are given in radicals by (\ref{eq1}), (\ref{eq4}),
and by the expressions in radicals, corresponding to (\ref{eq5}), is embedded.
This goal will be achieved using symbolic calculations described in Sections
\ref{sec3} and \ref{sec4}.

We have already mentioned that the Steffen polyhedron 
$S_0$
is flexible.
Let us define its flex 
$\{S_t \}_{t\in(-\varepsilon,\varepsilon)}$ 
right now.
To do this, it suffice to specify the position of each vertex
$v_j(t)$
of the polyhedron
$S_t$.
By definition, we put
$v_j(t)=v_j$
for all
$j=1,\dots,4$
and all $t$ close enough to zero, and define 
$v_9(t)$ 
as the point lying on the circle
$\gamma$,
defined in the property (b) in Section \ref{sec2}, and such that the oriented angle 
$\angle v_9(t)0v_9$
is equal to
$t$ 
radians; finally, we calculate the coordinates of  
$v_j(t)$ 
for all
$j=5,\dots,8$ 
and all 
$t$ 
close enough to zero, from the coordinates of  
$v_j(t)$, $j=1,\dots,4$, 
and
$v_9(t)$ 
in the same way as the coordinates of  
$v_j$ for $j=5,\dots,8$
were calculated from the coordinates of  
$v_j$, $j=1,\dots,4$, 
and
$v_9$ 
above.
In Section \ref{sec5} we study some properties of 
$\{S_t \}_{t\in(-\varepsilon,\varepsilon)}$
using floating point calculations. 

\section{An algorithm for checking whether a polyhedron is embedded}\label{sec3} 

Many algorithms for recognizing self-intersections of polyhedral surfaces 
are described in the literature; for example, see references in \cite{AV24}.
But their creators optimize performance, memory usage, and other parameters 
that are not interesting to us; on the other hand  they often don't care about 
the missing a ``small'', indistinguishable on the screen, self-intersection.
This does not suit us in principle, because even the presence of an intersection
consisting of a single point changes the answer in our problem to the opposite.
Therefore, we had to propose our own algorithm and had to realize it in Wolfram
Mathematica, using symbolic computations only.
Lemmas~\ref{lemma1} and~\ref{lemma2} provide mathematical background for our algorithm.

\begin{lemma}\label{lemma1}
Let 
$K$ 
be a connected 2-dimensional simplicial complex with or without boundary, and let 
$f: K\to\mathbb{R}^3$ 
be a polyhedron.
Then the following statements are equivalent to each other\,$:$ 
\par
$(\mbox{\emph{i}})$ $f(K)$
is not embedded in 
$\mathbb{R}^3;$
\par
$(\mbox{\emph{ii}})$ there are two simplices 
$\sigma_1, \sigma_2\in K$, $\dim\sigma_1\leqslant 1$,
and two points 
$u_j\in\sigma_j$, $j=1,2$,
such that 
$u_1\neq u_2$, $f(u_1)=f(u_2)$.
\end{lemma}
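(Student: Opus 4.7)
The easy direction (ii) $\Rightarrow$ (i) is immediate: the pair $u_1 \neq u_2$ with $f(u_1) = f(u_2)$ directly witnesses non-injectivity of $f$, so $f(K)$ is not embedded.

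For (i) $\Rightarrow$ (ii), assume $f$ fails to be injective, so there exist $u_1 \neq u_2$ with $f(u_1) = f(u_2)$. If either of $u_1, u_2$ already lies in a simplex of dimension $\leq 1$, we are done; so assume both lie in the relative interiors of two $2$-simplices $\tau_1, \tau_2$, necessarily distinct since $f$ is injective on each simplex. The strategy is to propagate this interior coincidence outward until it hits the $1$-skeleton. The key tool is the self-intersection set
\[
A = \{(u,u') \in \tau_1 \times \tau_2 : f(u) = f(u')\},
\]
a closed convex polytope identified, via $(u,u') \mapsto f(u)$, with the convex set $I := f(\tau_1) \cap f(\tau_2)$. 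Let $D = \{(u,u) : u \in \tau_1 \cap \tau_2\} \subseteq A$ denote the diagonal, corresponding to $f(\tau_1 \cap \tau_2)$. By hypothesis $(u_1,u_2) \in A \setminus D$, and because $u_i$ is in the interior of $\tau_i$, the common image $f(u_1) = f(u_2)$ lies in the relative interior of $I$. I would then move $(u,u')$ continuously along a segment inside $A$ starting at $(u_1,u_2)$ until it first meets the boundary of $\tau_1 \times \tau_2$. At the boundary point $(u_1^*, u_2^*)$ so obtained, at least one coordinate, say $u_1^*$, lies in $\partial \tau_1$, i.e., on an edge or vertex of $\tau_1$, furnishing the desired $\sigma_1$ of dimension $\leq 1$. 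Provided the segment avoids $D$, we have $u_1^* \neq u_2^*$ and (ii) is established.

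The one thing that needs verification is that the segment can be chosen to avoid $D$, for which I would split into two cases according as the planes carrying $f(\tau_1)$ and $f(\tau_2)$ coincide or not. In the coplanar case $I$ is a $2$-dimensional convex polygon while $f(\tau_1 \cap \tau_2)$ has dimension at most $1$, so a generic direction from the interior point $(u_1, u_2)$ reaches $\partial A$ off of $D$. In the transversal case $I$ is a segment of the line $\ell$ where the two planes meet; the assumption $f(u_1) \in \mathrm{int}\,f(\tau_1)$ rules out a shared edge of $\tau_1$ and $\tau_2$, since such an edge would force $\ell$ to coincide with the line through that edge, which is tangent to the triangle $f(\tau_1)$ along the edge and disjoint from its interior. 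Hence $\tau_1 \cap \tau_2$ is at most a vertex, $D$ is at most a single point in the one-dimensional segment $A$, and one of the two endpoints of $A$ lies outside $D$. The main obstacle is tidying up this case split and the ``avoid $D$'' claim; beyond that, only elementary convex and affine geometry is used.
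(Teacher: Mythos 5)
Your argument is correct and follows essentially the same route as the paper: reduce to the case where both preimages are interior to $2$-simplices, then push the coincidence point out to the boundary of one triangle while keeping it off the image of the shared face. The only difference is in execution of that last step --- the paper takes a maximal segment in $f(\tau_1)\cap f(\tau_2)$ through the double point and notes that if both its endpoints lay in $f(\tau_1\cap\tau_2)$ then convexity would force $u_1=u_2$, whereas you reach the same conclusion via the coincidence polytope $A$ and a coplanar/transversal case split; both are sound.
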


\begin{proof}
The statement (ii) yields that
$f$ 
is not injective. 
Hence, 
$f(K)$ 
is not embedded, and (i) is true.

Conversely, suppose that (i) holds true.
Then
$f$ 
is not injective, i.e., there are two points 
$\widetilde{u}_j\in K$, $j=1,2$,
such that 
$\widetilde{u}_1\neq \widetilde{u}_2$ 
and 
$f(\widetilde{u}_1)=f(\widetilde{u}_2)$.
For every 
$j=1,2$, 
among all simplices in 
$K$
containing  
$\widetilde{u}_j$,
choose the simplex of the minimum dimension and denote it by 
$\widetilde{\sigma}_j$.
Let us show how to choose
$\sigma_1$, $\sigma_2$, $u_1$, and $u_2$, 
satisfying (ii).

First, consider the case, when 
$\dim \widetilde{\sigma}_1<2$
or 
$\dim \widetilde{\sigma}_2<2$.
Swapping, if necessary, the indices 
$j=1,2$ 
so that 
$\dim\widetilde{\sigma}_1\leqslant 1$, 
we conclude that (ii) is true with
$u_j=\widetilde{u}_j$
and
$\sigma_j=\widetilde{\sigma}_j$
for 
$j=1,2$.

Now consider the case, when 
$\dim\widetilde{\sigma}_1=\dim\widetilde{\sigma}_2=2$.
For every 
$j=1,2$, 
$\widetilde{u}_j$
is an interior point of
$\widetilde{\sigma}_j$ 
because
$\widetilde{\sigma_j}$
has the minimum dimension among all simplices in 
$K$
containing  
$\widetilde{u}_j$.
Therefore, the point 
$f(\widetilde{u}_1)=f(\widetilde{u}_2)$ 
is an interior point of both the triangle 
$f(\widetilde{\sigma}_1)$ 
and the triangle 
$f(\widetilde{\sigma}_2)$.
Hence, 
$f(\widetilde{\sigma}_1)\cap f(\widetilde{\sigma}_2)$
is not reduced to the single point
$f(\widetilde{u}_1)=f(\widetilde{u}_2)$.
Let a straight-line segment 
$\tau$
be such that 

(A) 
$\tau\subset f(\widetilde{\sigma}_1)\cap f(\widetilde{\sigma}_2)$;

(B) 
$f(\widetilde{u}_1)=f(\widetilde{u}_2)\in \tau$;  
and 

(C) 
$\tau$ 
is maximal with respect to inclusion among all segments satisfying (A) and (B).

Note that at least one endpoint of  
$\tau$ 
is not contained in the set
$f(\widetilde{\sigma}_{12})$,
where 
$\widetilde{\sigma}_{12}=\widetilde{\sigma_1}\cap\widetilde{\sigma_2}$.
Indeed, by the definition of simplicial complex, 
$\widetilde{\sigma}_{12}$ 
is either an empty set, a 0-dimensional simplex, or a 1-dimensional simplex of 
$K$, 
and, 
by the definition of polyhedron, 
$f\vert_{\widetilde{\sigma}_{12}}$
is a nondegenerate affine linear map.
Therefore, assuming that both ends of  
$\tau$ 
are contained in  
$f(\widetilde{\sigma}_{12})$, 
it follows that  
$\tau\subset f(\widetilde{\sigma}_{12})$.
But then the condition 
$f(\widetilde{u}_1)=f(\widetilde{u}_2)\in \tau$ 
implies 
$\widetilde{u}_1=\widetilde{u}_2$, 
which contradicts the above assumption 
$\widetilde{u}_1\neq\widetilde{u}_2$.

So, at least one endpoint of  
$\tau$ 
is not contained in 
$f(\widetilde{\sigma}_{12})$.
Let us denote that endpoint by
$v$.
Since
$\tau$
is maximal with respect to inclusion among all segments with the properties (A) and (B),
$v$ 
cannot be an interior point for both the triangle 
$f(\widetilde{\sigma}_1)$ 
and the triangle 
$f(\widetilde{\sigma}_2)$.
Swapping, if necessary, the indices 
$j=1,2$, 
we may assume without loss of generality that 
$v$
is not an interior point of the triangle 
$f(\widetilde{\sigma}_1)$.
Hence, there is a simplex 
${\sigma_1}\subset\widetilde{\sigma}_1$,
$\dim {\sigma}_1 <\dim \widetilde{\sigma}_1=2$,
such that
$v\in f(\sigma_1)$.
Thus, (ii) is true with
$u_1=(f\vert_{\sigma_1})^{-1}(v)\in\sigma_1$,
$u_2=\widetilde{u}_2\in\sigma_2$,
and 
$\sigma_2=\widetilde{\sigma}_2$.
\end{proof}

Informally speaking, Lemma \ref{lemma1} reduces the problem ``whether a given 
polyhedral surface has self-intersections'' to the problem ``whether a segment 
and a triangle intersect''.
Now we begin to study the last one, and we need the following notation.

As is known, if 
$x_i=(x_{i,1},x_{i,2}, x_{i,3})$, $i=0,\dots,3$, 
are four points in 
$\mathbb{R}^3$, 
then the oriented volume 
$\operatorname{Vol}(x_0, x_1, x_2, x_3)$ 
of the tetrahedron 
$\{x_0, x_1, x_2, x_3\}$
can be calculated using one of the following formulas
\begin{equation*}
\begin{split}
\operatorname{Vol}(x_0, x_1, x_2, x_3)= 
{\frac16}((x_1-x_0)\times (x_2-x_0))\cdot (x_3-x_0)\hphantom{AAAAAAAAAAAAA}\\
\hphantom{AAAAAAAA} ={\frac16}{\begin{vmatrix}
x_{1,1}-x_{0,1}&x_{1,2}-x_{0,2}&x_{1,3}-x_{0,3}\\
x_{2,1}-x_{0,1}&x_{2,2}-x_{0,2}&x_{2,3}-x_{0,3}\\
x_{3,1}-x_{0,1}&x_{3,2}-x_{0,2}&x_{3,3}-x_{0,3}
\end{vmatrix}}
={\frac16}{\begin{vmatrix}
1&x_{0,1}&x_{0,2}&x_{0,3}\\
1&x_{1,1}&x_{1,2}&x_{1,3}\\
1&x_{2,1}&x_{2,2}&x_{2,3}\\
1&x_{3,1}&x_{3,2}&x_{3,3}
\end{vmatrix}},
\end{split}
\end{equation*}
and, in particular, is a polynomial in variables 
$x_{i,k}$, $i=0,\dots,3$, $k=1,\dots,3$.
We put by definition 
$g(x_0, x_1, x_2, x_3)= 6\operatorname{Vol}(x_0, x_1, x_2, x_3)$.
 
Let
$f: K\to\mathbb{R}^3$
be a polyhedron, and let 
$\delta$
and 
$\Delta$
be 1- and 2-dimensional simplices of
$K$, 
respectively.
We denote the vertices of the closed triangle
$f(\Delta)$
by 
$y_k=(y_{k,1},y_{k,2},y_{k,3})\in\mathbb{R}^3$, $k=1,\dots,3$,
and denote the endpoints of the closed straight-line segment
$f(\delta)$
by
$z_j=(z_{j,1},z_{j,2},z_{j,3})\in\mathbb{R}^3$, $j=1,2$.

\begin{lemma}\label{lemma2}
With the above notation, the following statements are true
\par
$(\alpha)$ 
if 
$g(y_1, y_2, y_3, z_1)g(y_1, y_2, y_3, z_2)>0$, then 
$f(\delta)\cap f(\Delta)=\varnothing;$

$(\beta)$
if 
$g(y_1, y_2, y_3, z_1)g(y_1, y_2, y_3, z_2)<0$, 
and, in addition,
$g(z_1, z_2, y_2, y_3)$, $g(z_1, z_2, y_3, y_1)$,
and
$g(z_1, z_2, y_1, y_2)$
are nonzero and have one and the same sign, then 
$f(\delta)\cap f(\Delta)\neq\varnothing;$

$(\gamma)$ 
if 
$g(y_1, y_2, y_3, z_1)g(y_1, y_2, y_3, z_2)<0$, 
and, in addition,
$g(z_1, z_2, y_2, y_3)$, $g(z_1, z_2, y_3, y_1)$,
and
$g(z_1, z_2, y_1, y_2)$
are nonzero, but not all of them have one and the same sign, then 
$f(\delta)\cap f(\Delta)=\varnothing$.
\end{lemma}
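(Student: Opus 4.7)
The plan is to treat all three parts together, distinguishing them only by the sign configuration of the five oriented volumes involved. The starting point is the standard interpretation of $g(y_1,y_2,y_3,z)=6\operatorname{Vol}(y_1,y_2,y_3,z)$ as proportional to the signed distance from $z$ to the plane $\Pi$ through $y_1,y_2,y_3$, so that its sign records which open halfspace of $\Pi$ contains $z$. In part $(\alpha)$ the hypothesis says $z_1$ and $z_2$ lie strictly in one and the same open halfspace determined by $\Pi$, hence so does the whole segment $f(\delta)$, and disjointness from $f(\Delta)\subset\Pi$ follows at once.

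For $(\beta)$ and $(\gamma)$, opposite signs put $z_1,z_2$ strictly on opposite sides of $\Pi$, so the line through them meets $\Pi$ at exactly one point $p$. Writing $p=(1-t)z_1+tz_2$ and applying multilinearity of $g$ in its last argument, the condition $g(y_1,y_2,y_3,p)=0$ yields
\[
t=\frac{g(y_1,y_2,y_3,z_1)}{g(y_1,y_2,y_3,z_1)-g(y_1,y_2,y_3,z_2)}\in(0,1),
\]
so that $p$ lies strictly between $z_1$ and $z_2$ on $f(\delta)$. Consequently $f(\delta)\cap f(\Delta)$ equals either $\{p\}$ or $\varnothing$, and everything reduces to deciding whether $p$ belongs to the closed triangle $f(\Delta)$.

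The main step is the identity
\[
g(z_1,z_2,y_i,y_j)=(z_2-z_1)\cdot\bigl((y_i-p)\times(y_j-p)\bigr),
\]
which I would prove by substituting $y_i-z_1=(y_i-p)+t(z_2-z_1)$ (and the analogous expression for $y_j$) into the triple product $((z_2-z_1)\times(y_i-z_1))\cdot(y_j-z_1)$: every term except the one displayed contains the vector $z_2-z_1$ in two slots of a scalar triple product and so vanishes. Since $p\in\Pi$, the vector $(y_i-p)\times(y_j-p)$ is parallel to any fixed unit normal $\widehat{n}$ of $\Pi$ and equals $2A(p,y_i,y_j)\widehat{n}$, where $A$ denotes signed area in $\Pi$ oriented so that $A(y_1,y_2,y_3)>0$. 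Hence the three quantities $g(z_1,z_2,y_2,y_3)$, $g(z_1,z_2,y_3,y_1)$, $g(z_1,z_2,y_1,y_2)$ differ from $A(p,y_2,y_3)$, $A(p,y_3,y_1)$, $A(p,y_1,y_2)$ only by the common nonzero factor $2(z_2-z_1)\cdot\widehat{n}$, and therefore share their pattern of signs.

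These three signed areas are the unnormalized barycentric coordinates of $p$ with respect to $\{y_1,y_2,y_3\}$ and sum to $A(y_1,y_2,y_3)>0$. In case $(\beta)$, the three $g$-values share a single nonzero sign, hence so do the three $A$-values; since their sum is positive, they are all strictly positive, $p$ lies in the interior of $f(\Delta)$, and $f(\delta)\cap f(\Delta)=\{p\}\neq\varnothing$. In case $(\gamma)$, the three $g$-values are nonzero but not all of one sign, so at least one of the signed areas is negative, the corresponding barycentric coordinate of $p$ is negative, $p$ lies outside the closed triangle, and $f(\delta)\cap f(\Delta)=\varnothing$. The only step requiring real computation is the third-paragraph identity; the rest is sign bookkeeping.
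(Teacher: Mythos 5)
Your proposal is correct, and part $(\alpha)$ coincides with the paper's argument, but for $(\beta)$ and $(\gamma)$ you take a genuinely different route. The paper argues topologically: it considers the halfplane $\pi(x)$ bounded by the line through $f(\delta)$ and passing through a point $x$ traversing the boundary of $f(\Delta)$, and reads off from the common sign of $g(z_1,z_2,y_i,y_j)$ whether $\pi(x)$ makes a full turn, i.e., whether the boundary of the triangle is linked with that line; combined with $z_1,z_2$ lying on opposite sides of the plane, this gives $(\beta)$, and the failure of the full turn gives $(\gamma)$. You instead compute the unique point $p$ where the line through $z_1,z_2$ meets the plane of the triangle, verify via the sign condition that $p$ lies in the open segment $f(\delta)$, and then prove the identity $g(z_1,z_2,y_i,y_j)=(z_2-z_1)\cdot\bigl((y_i-p)\times(y_j-p)\bigr)$, which exhibits the three $g$-values as a common nonzero multiple of the unnormalized barycentric coordinates of $p$ with respect to $y_1,y_2,y_3$; the sign bookkeeping then settles whether $p$ lies inside or outside the closed triangle. (One cosmetic point: $g$ is affine, not multilinear, in its last argument, but that is exactly what your computation of $t$ uses.) Your version is more computational and arguably more self-contained --- the paper's claim that the halfplane ``always rotates in the same direction and makes a complete turn'' is left at an intuitive level, essentially a linking-number argument --- while the paper's version is shorter and makes the geometric picture (Fig.~\ref{fig3}) transparent. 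Both establish the lemma.
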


\begin{proof}
The statement $(\alpha)$ is true because if 
$g(y_1, y_2, y_3, z_1)g(y_1, y_2, y_3, z_2)>0$,
then 
$z_1$ 
and 
$z_2$ 
lie on one side of the plane containing the triangle 
$f(\Delta)=\{y_1, y_2, y_3\}$
(see the left part of Fig.~\ref{fig3}).

\begin{figure}[ht]
\begin{center}
\includegraphics[width=0.9\textwidth]{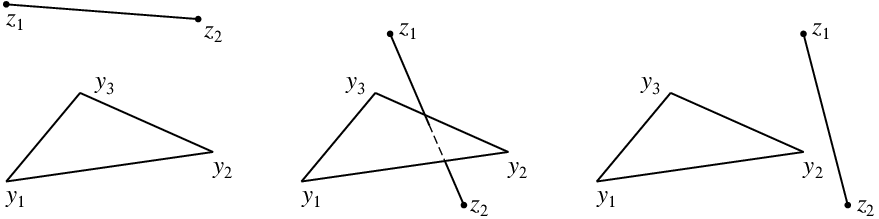}
\end{center}
\caption{Various cases of mutual arrangement of the triangle 
$f(\Delta)$ with vertices $y_1, y_2, y_3$
and the straight-line segment $f(\delta)$ with endpoints $z_1$, $z_2$.}\label{fig3}
\end{figure}

Now let us assume that the conditions of the statement 
$(\beta)$ 
are fulfilled.
Since
\begin{equation*}
g(y_1, y_2, y_3, z_1)g(y_1, y_2, y_3, z_2)<0,
\end{equation*}
the points
$z_1$ 
and 
$z_2$ 
lie on different sides of the plane containing the triangle 
$f(\Delta)=\{y_1, y_2, y_3\}$ 
(see the center and right parts of Fig.~\ref{fig3}).
For each point 
$x$ 
on the boundary of  
$f(\Delta)$, 
denote by 
$\pi(x)$ 
the halfplane bounded by the line containing 
$f(\delta)$ 
and passing through 
$x$.
Since  
$g(z_1, z_2, y_2, y_3)$, $g(z_1, z_2, y_3, y_1)$,
and
$g(z_1, z_2, y_1, y_2)$ 
have the same sign, then if 
$x$ 
goes around the boundary of 
$f(\Delta)$ 
once, moving all time in the same direction, then the halfplane 
$\pi(x)$  
also always rotates in the same direction and makes a complete turn 
around the straight line including 
$f(\delta)$. 
This means that the boundary of 
$f(\Delta)$ 
is linked to the straight line including 
$f(\delta)$.  
Taking into account that the endpoints of 
$f(\delta)$ 
lie on the opposite sides of the plane containing 
$f(\Delta)$, we conclude that 
$f(\delta)\cap f(\Delta)\neq\varnothing$.
Hence, 
$(\beta)$ is proved.
This case is schematically shown in the center part of Fig.~\ref{fig3}.

We treat the case 
$(\gamma)$ 
similarly to  
$(\beta)$.
But this time the halfplane 
$\pi(x)$  
does not always rotate in the same direction and therefore does not make 
a complete turn around the straight line including 
$f(\delta)$. 
Therefore,
$f(\delta)\cap f(\Delta)=\varnothing$,
and
$(\gamma)$ 
is proved.
This case is schematically shown in the right part of Fig.~\ref{fig3}.
\end{proof}

Lemma \ref{lemma2} solves the problem ``whether a given segment 
and triangle intersect'' in many cases, but in all.
We say that the latter are ``cases requiring additional study''.
We hope that in reality there will be few or no such cases.
Therefore, we do not want to complicate our algorithm; it suit us if, when 
such a case occurs, the algorithm informs us about it and continues its work.

Now let us describe our algorithm.

The algorithm accepts four files as input: 
a list \texttt{s} of the 1-dimensional simplices of 
$K$; 
a list \texttt{ss} of the edges of 
$f(K)\subset\mathbb{R}^3$
with the coordinates of their endpoints; 
a list \texttt{t} of the 2-dimensional simplices of 
$K$; 
and a list \texttt{tt} of the faces of 
$f(K)\subset\mathbb{R}^3$
with the coordinates of their vertices. 

Our algorithm performs a complete search for all pairs 
$(\delta, \Delta)$, 
where 
$\delta\in\tt{s}$
and 
$\Delta\in\tt{t}$.
When 
$(\delta, \Delta)$
is fixed, proceed as follows: 

$\langle 1\rangle$ 
If 
$\delta\subset \Delta$, 
then go to 
$\langle 8\rangle$; 
else go to 
$\langle 2\rangle$.
[At this step we conclude that this situation makes no contribution to the set of 
self-intersections of
$f(K)$]

$\langle 2\rangle$ 
Pick up the coordinates of the endpoints 
$z_1$
and 
$z_2$ 
of 
$f(\delta)$
from the list 
$\tt{ss}$
and the coordinates of the vertices 
$y_1$, $y_2$,
and 
$y_3$ 
of 
$f(\Delta)$
from the list 
$\tt{tt}$.
If 
$\delta\cap \Delta$
consists of a single point, 
$u$, 
then go to 
$\langle 3\rangle$; 
else go to 
$\langle 4\rangle$.
[At this step we introduce notation and switch between two cases]

$\langle 3\rangle$ 
Change, if necessary, the indices 1, 2, and 3 so that 
$y_1=z_1=f(u)$;
if 
$g(y_1,y_2,y_3,z_2)\neq 0$ 
then go to 
$\langle 8\rangle$;
else add the line ``The case of $(\delta, \Delta)$ requires additional study''
to the output file 
$\texttt{out1}$
and go to 
$\langle 8\rangle$.
[At this step we study the case when 
$\delta\cap \Delta$
consists of a single point]

$\langle 4\rangle$
If
$g(y_1, y_2, y_3, z_1)g(y_1, y_2, y_3, z_2)>0$,
then go to 
$\langle 8\rangle$; 
else go to 
$\langle 5\rangle$.
[At this step we study the case when
$\delta\cap \Delta=\varnothing$; 
according to the case 
$(\alpha)$ 
of Lemma \ref{lemma2},
we conclude that that this situation makes no contribution to the set of 
self-intersections of
$f(K)$]

$\langle 5\rangle$
If
$g(y_1, y_2, y_3, z_1)g(y_1, y_2, y_3, z_2)<0$ 
and
$g(z_1, z_2, y_2, y_3)$, $g(z_1, z_2, y_3, y_1)$,
and
$g(z_1, z_2, y_1, y_2)$
are nonzero and have one and the same sign, then 
add the line ``The edge 
$f(\delta)$ 
intersects the face
$f(\Delta)$''
to the output file 
$\texttt{out2}$,
and go to 
$\langle 8\rangle$;
else go to 
$\langle 6\rangle$.
[At this step we use the case 
$(\beta)$ 
of Lemma \ref{lemma2}, and conclude that that this situation contributes 
to the set of self-intersections of
$f(K)$]

$\langle 6\rangle$ 
If
$g(y_1, y_2, y_3, z_1)g(y_1, y_2, y_3, z_2)<0$ 
and
$g(z_1, z_2, y_2, y_3)$, $g(z_1, z_2, y_3, y_1)$,
and
$g(z_1, z_2, y_1, y_2)$
are nonzero, but not all of them have one and the same sign, then go to 
$\langle 8\rangle$; 
else go to 
$\langle 7\rangle$.
[At this step we use the case 
$(\gamma)$ 
of Lemma \ref{lemma2},
and conclude that this situation makes no contribution to the set of 
self-intersections of
$f(K)$]

$\langle 7\rangle$
Add the line ``The case of 
$(\delta, \Delta)$ 
requires additional study'' to the output file 
$\texttt{out1}$
and go to 
$\langle 8\rangle$.
[This step corresponds to situations not covered by Lemma \ref{lemma2}]

$\langle 8\rangle$ 
If 
$(\delta, \Delta)$ 
is the last item in the complete search, then go to 
$\langle 9\rangle$; 
else choose the next pair 
$(\delta, \Delta)$ 
and go to 
$\langle 1\rangle$.

$\langle 9\rangle$ 
Save the output files, 
$\texttt{out1}$ 
and 
$\texttt{out2}$, 
and quit.

Another description of our algorithm is available in \cite{AV24}.

\section{Studying the embeddedness of the Steffen polyhedron $S_0$ via symbolic calculations}\label{sec4} 

We have implemented the algorithm described in Section \ref{sec3} as 
a program in the software system Wolfram Mathematica.
The full text of this program is available in \cite{AV25}.

Recall that throughout this article, the Steffen polyhedron
$S_0$ 
is the polyhedron constructed in Section \ref{sec2} from the development
shown in Fig.~\ref{fig1}.
In particular, in the coordinate system constructed in Section \ref{sec2}, the vertex 
$v_9$
of
$S_0$
is located on the $z$-axis and has a negative $z$-coordinate.

The lists \texttt{s} and \texttt{t}  of 1- and 2-dimensional simplices of 
$K$, 
mentioned in Section \ref{sec3}, are compiled directly from the development of 
$S_0$,
shown in Fig.~\ref{fig1}.
In Section \ref{sec3} we explained in detail how we find the coordinates of 
all the vertices
$v_1,\dots,v_9$ 
of 
$S_0$
in rational numbers or in radicals.
Using these coordinates, we prepare the list \texttt{ss} of the edges of 
$f(K)\subset\mathbb{R}^3$
with the coordinates of their endpoints and the list \texttt{tt} of the faces of 
$f(K)\subset\mathbb{R}^3$
with the coordinates of their vertices.

Using the lists \texttt{s}, \texttt{t}, \texttt{ss}, and \texttt{tt} as
input data for our program and performing all calculations symbolically 
(i.e. without using floating point arithmetic), we get two empty output files
\texttt{out1} and \texttt{out2}.
This means that the program does not find 
in 
$S_0$
intersections and cases requiring additional study.
On this basis, we consider the following theorem to be proved:

\begin{thrm} 
The Steffen polyhedron 
$S_0$ 
is embedded $($i.e., $S_0$ has no self-intersections$)$. \hfill$\square$
\end{thrm}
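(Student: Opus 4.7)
The plan is to apply the algorithm from Section \ref{sec3} to the Steffen polyhedron $S_0$ with every arithmetic step performed symbolically. By Lemma \ref{lemma1}, proving that $S_0$ is embedded reduces to showing that for every 1\nobreakdash-simplex $\delta$ and every 2\nobreakdash-simplex $\Delta$ of $K$, the images $f(\delta)$ and $f(\Delta)$ meet only in points that are forced by the simplicial structure. The sign tests of Lemma \ref{lemma2} translate this question, pair by pair, into decisions about the signs of the polynomials $g$ (six times signed tetrahedral volumes) evaluated at the coordinates of $v_1,\dots,v_9$.

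First I would assemble the four input lists. The combinatorial lists \texttt{s} and \texttt{t} of 1- and 2\nobreakdash-simplices of $K$ are read directly from the development in Fig.~\ref{fig1}. The lists \texttt{ss} and \texttt{tt} consist of those simplices enriched with the exact coordinates of the vertices: for $v_1,\dots,v_4,v_9$ from (\ref{eq1}), for $v_6$ from (\ref{eq4}), and, for $v_5$, $v_7$, $v_8$, from the analogous radical expressions produced by the same method as in Section \ref{sec2}. Keeping coordinates in exact radical form is essential, since the statement must exclude even a single shared point outside $\widetilde{\sigma}_{12}$.

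Next I would execute the Wolfram Mathematica implementation of the algorithm on these inputs, with every sign of a $g$\nobreakdash-value decided through Mathematica's exact algebraic\nobreakdash-number machinery (\texttt{Sign}, \texttt{RootReduce}, \texttt{FullSimplify}) rather than through floating point. For each pair $(\delta,\Delta)$ the algorithm either discards the pair (via $\delta\subset\Delta$ or via cases $(\alpha)$ and $(\gamma)$ of Lemma \ref{lemma2}), records a genuine self-intersection in \texttt{out2} (case $(\beta)$), or records a deferred case in \texttt{out1} (whenever some $g$\nobreakdash-value vanishes). If the complete symbolic run terminates with both \texttt{out1} and \texttt{out2} empty, then Lemmas \ref{lemma1} and \ref{lemma2} together imply that $f$ is injective, and the theorem follows.

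The principal obstacle is not mathematical but computational: each $g$\nobreakdash-value for a pair involving $v_5,\dots,v_8$ is a polynomial whose entries are nested radicals in $\sqrt{31}$, $\sqrt{166}$, $\sqrt{5146}$, and $\sqrt{a}$ with $a=23829556819105727+373057935372156\sqrt{5146}$. The hard part is guaranteeing that Mathematica can \emph{decide} the sign of every such expression, and in particular can certify nonvanishing whenever Lemma \ref{lemma2} is to apply; a silent failure to resolve even one sign would invalidate the conclusion. I would therefore audit the output to confirm that each pair was resolved by a definite sign determination rather than by a missed deferred case, so that the emptiness of \texttt{out1} and \texttt{out2} constitutes a genuine symbolic certificate.
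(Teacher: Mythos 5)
Your proposal follows exactly the paper's own route: build the lists \texttt{s}, \texttt{t}, \texttt{ss}, \texttt{tt} from the development and the exact radical coordinates, run the Section~\ref{sec3} algorithm with purely symbolic sign determinations via Lemmas~\ref{lemma1} and~\ref{lemma2}, and conclude embeddedness from the emptiness of \texttt{out1} and \texttt{out2}. This matches the paper's proof in both structure and substance, including the emphasis that every sign must be certified exactly so that no deferred case is silently missed.
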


Since we know the coordinates of all the vertices of
$S_0$ 
in rational numbers or in radicals, it is not difficult for us to 
find its volume in radicals.
To do this, we represent the volume of 
$S_0$ 
as the sum of the oriented volumes of 14 tetrahedra, each of which has the 
origin of the coordinate system as its apex and some face of 
$S_0$
as its base. 
(Of course, the orientation of each tetrahedron must be inherited from 
a fixed orientation of 
$S_0$.)
Symbolic computations in Wolfram Mathematica show that the volume of 
$S_0$ 
is equal to 
$187\sqrt{83}/(6\sqrt{2})$. 
It is interesting to note that the same number, $187\sqrt{83}/(6\sqrt{2})$, 
is equal to the volume of the tetrahedron whose vertices are the vertices
$v_1$, $v_2$, $v_3$, 
and
$v_4$  
of the Steffen polyhedron 
$S_0$, see Figs. 1 and 2.

\section{Numerical study of the flex $\{S_t\}_{t\in (-\varepsilon,\varepsilon)}$}\label{sec5} 

Recall that, in the last paragraph of Section \ref{sec2}, we
defined a flex 
$\{S_t \}_{t\in(-\varepsilon,\varepsilon)}$ 
of the Steffen polyhedron 
$S_0$
by specifying the position of each vertex
$v_j(t)$
of the polyhedron
$S_t$.
Namely, we put
$v_j(t)=v_j$
for all
$j=1,\dots,4$
and every $t$ close enough to zero, and define 
$v_9(t)$ 
as the point lying on the circle
$\gamma$,
defined in the property (b) in Section \ref{sec2}, and such that the oriented angle 
$\angle v_9(t)0v_9$
is equal to
$t$ 
radians.
Since lengths of all edges of the Steffen polyhedron are known to us, the above data 
is sufficient to compute the coordinates of the points 
$v_j(t)$ 
for every
$j=5,\dots,8$ 
and every
$t$ 
close enough to zero, from the coordinates of the points 
$v_j(t)$, $j=1,\dots,4$, 
and
$v_9(t)$.

In Section \ref{sec4} we used Lemma \ref{lemma2} in order to conclude that
$S_0$
is embedded, i.e., our arguments were based on some combinations of inequalities
involving the values of the polynomial
$g$.
By continuity, the same inequalities hold true for the values of
$g$
calculated for
$S_t$
for every
$t$
close enough to zero.
Thus, 
$S_t$
is embedded for all such 
$t$.
Naturally, we want to have a quantitative estimate for the maximum 
$t$
for which 
$S_t$
is embedded.
 
In Section \ref{sec5} we present the results of our study of the problem 
``what is the maximum value of
$\varepsilon>0$
such that 
$S_t$ 
has no self-intersections for all 
$t\in (-\varepsilon, \varepsilon)$?''
In other words, in this Section we want to understand how big the angle 
$\angle v_9(t)0v_9$
can be made so that
$S_t$
is still embedded.

Unfortunately, we can only answer this question using floating point calculations.

We put 
$t = \arcsin(9/40) \approx 0.226943 \approx 13.0029^{\circ}$ 
and apply our program in Wolfram Mathematica mentioned in Section \ref{sec4},
which implements our algorithm described in Section \ref{sec3}.
Using floating point calculations we see that $S_t$ 
has no self-intersections.

Similarly, for 
$t = \arcsin(19/80) \approx 0.239791 \approx 13.739^{\circ}$ 
numerical calculations show that 
$S_t$ 
has self-intersections. 
Moreover, our program informs us that self-intersections occur because the edge 
$\{v_2(t),v_3(t)\}$ 
intersects the face 
$\{v_7(t),v_8(t),v_9(t)\}$, 
and  
$\{v_7(t),v_8(t)\}$ 
intersects 
$\{v_1(t),v_2(t),v_3(t)\}$.

Therefore, we can assert that 
$v_9(t)$
can be moved around the circle 
$\gamma$
per angle up to
$13^{\circ}$
in both directions from the point
$v_9=v_9(0)$
so that  
$S_t$
is embedded.
In other words, the range of the displacements of 
$v_9(t)$
for which 
$S_t$
is embedded,
is at least
$9\sqrt{2621}/100\approx 4.60761$.
Note that this value is comparable to 5, i.e., to the length of the shortest edges of
$S_t$. 

\section{Concluding remarks}\label{sec6}

The theory of flexible polyhedra began with Bricard's article \cite{Br97}.
Its heyday started after Connelly's article \cite{Co77}.
The most famous result  of the theory of flexible polyhedra states 
that the volume of any flexible closed orientable polyhedron in
$\mathbb{R}^n$, $n\geq 3$,
remains constant during the flex, see,
for example, Gaifullin's overview article \cite{Ga18}.
This theory is also being developed in non-Euclidean spaces, 
see, for example, \cite{Ga25}.

The theory of flexible polyhedra is attractive because everyone can 
find here an open problem of any level of difficulty that would suit their
mathematical tastes and background.
Below we pose two open problems related to the topic of this article.

Recall that two embedded polyhedra in 
$\mathbb{R}^3$
are called scissors-congruent if the finite part of the space, bounded by the 
first polyhedron can be cut into finitely many tetrahedra that can be reassembled 
to yield the finite part of the space bounded by the second polyhedron.

The first problem develops the well-known fact that if one embedded polyhedron is 
obtained from another by a flex, then they are scissors-congruent, see \cite{GI18}.
Hence, the Steffen polyhedron 
$S_0$
is scissors-congruent to every polyhedron
$S_t$
constructed in the last paragraph of Section \ref{sec2} with 
$t$
close enough to zero (so that 
$S_t$
is embedded).

\textbf{Open problem 1:}
For a given 
$t$
close enough to zero, explicitly specify the partition of 
$S_0$
into a finite set of tetrahedra from which  
$S_t$
can be reassembled.

In Section \ref{sec4} we observed that the volume of the Steffen polyhedron 
$S_0$
is equal to the volume of the tetrahedron, whose vertices are the vertices
$v_1$, $v_2$, $v_3$, 
and
$v_4$  
of 
$S_0$.
This naturally gives rise to the following

\textbf{Open problem 2:}
Are the Steffen polyhedron 
$S_0$
and 
the tetrahedron 
$\{v_1, v_2, v_3, v_4\}$  
scissors-congruent?
If yes, explicitly specify the partition of 
$\{v_1, v_2, v_3, v_4\}$ 
into a finite set of tetrahedra from which  
$S_0$
can be reassembled.

In conclusion, the authors declare that all symbolic and numerical calculations 
performed in the preparation of this article were executed using the computer 
software system Wolfram Mathematica 12.1 \cite{Wo99}, license 3322--8225.

\subsection*{Funding}
The research was carried out within the State Task to the Sobolev Institute of 
Mathematics (Project FWNF--2022--0005).

\end{document}